\newtheorem{lemma}{Lemma}
\newtheorem{theorem}{Theorem}
\theoremstyle{remark}
\newtheorem*{remark}{\bf Remark}
\let\wt\widetilde
\newcommand\m{\mathrm{m}}
\renewcommand{\d}{{\mathrm d}}
\newcommand{\CT}{\operatorname{CT}}
\renewcommand{\Im}{\operatorname{Im}}
\renewcommand{\Re}{\operatorname{Re}}
\newcommand{\sign}{\operatorname{sign}}
\begin{document}

\title{On the Mahler measure of~a~family~of~genus~2~curves}

\author{Marie Jos\'e Bertin}
\address{Universit\'e Pierre et Marie Curie (Paris 6), Institut de Math\'ematiques, 4 Place Jussieu, F-75252 Paris, FRANCE}
\email{marie-jose.bertin@imj-prg.fr}

\author{Wadim Zudilin}
\address{School of Mathematical and Physical Sciences, The University of Newcastle, Callaghan NSW 2308, AUSTRALIA}
\email{wzudilin@gmail.com}

\date{17 May 2014 \& 30 December 2015}

\begin{abstract}
We establish a general identity between the Mahler measures $\m(Q_k(x,y))$ and $\m(P_k(x,y))$ of two polynomial families,
where $Q_k(x,y)=0$ and $P_k(x,y)=0$ are generically hyperelliptic and elliptic curves, respectively.
\end{abstract}

\subjclass[2010]{Primary 11F67; Secondary 11F11, 11F20, 11G16, 11G55, 11R06, 14H52, 19F27}
\keywords{Mahler measure, $L$-value, elliptic curve, hyperelliptic curve, elliptic integral}

\thanks{The second author is supported by Australian Research Council grant DP140101186.}

\maketitle

\section{Introduction}
\label{s-intro}

In his 1998 \emph{Experimental Mathematics} paper \cite{Boy98} Boyd gave many conjectures about (logarithmic) Mahler measures
of two-variable polynomials,
$$
\m(P(x,y))
=\frac1{(2\pi i)^2}\iint_{|x|=|y|=1}\log|P(x,y)|\,\frac{\d x}x\,\frac{\d y}y,
$$
the conjectures that started the whole area of research on the border
of number theory, $K$-theory, algebraic geometry and analysis. One of the exemplar families
include evaluations of $\m(P_k(x,y))$ for
$$
P_k(x,y)=(x+1)y^2+(x^2+kx+1)y+(x^2+x), \quad k\in\mathbb Z,
$$
which can be characterised by $\m(P_k)/L'(E,0)\in\mathbb Q^\times$, where $E:P_k(x,y)=0$ is (generically)
an elliptic curve and $L'(E,0)$ is the derivative of its $L$-function $L(E,s)$ at $s=0$. Very few of these are proven so far:
$k=0,6$ (conductor~36) by Rodriguez-Villegas~\cite{RV99},
$k=1,10,-5$ (conductor~14) by Mellit~\cite{Mel09}, and
$k=4,-2$ (conductor~20) by Rogers and the second author~\cite{RZ12}.
Note that these particular cases (as well as all other proven cases of elliptic type,
for both CM and non-CM curves) accidentally fall under application of the Mellit--Brunault formula~\cite{Zud14}.

Boyd also indicated in \cite{Boy98} some families of Mahler measures related to genus~2 curves.
One of them was studied in great detail in Bosman's thesis \cite{Bos04}:
namely, he considered the family $\m(Q_k(X,Y))$, where
$$
Q_k(X,Y)=Y^2+(X^4+kX^3+2kX^2+kX+1)Y+X^4,
$$
and showed that the curve $C:Q_k(X,Y)=0$ has genus $2$ for $k\in\mathbb C\setminus\{-1,0,4,8\}$;
genus $1$ for $k=-1$ and~$4$; genus~$0$ for $k=8$; and reducible to two irreducible components of genus~$0$ for $k=0$.
The principal result of~\cite{Bos04} includes the following three evaluations:
\begin{gather}
\m(Q_2)=L'(E_{36},0), \quad\text{where}\; E_{36}:y^2=x^3+1 \;\;\text{(conductor 36)},
\label{Q2}
\\
\m(Q_{-1})=2L'(\chi_{-3},-1)
\quad\text{and}\quad
\m(Q_8)=4L'(\chi_{-4},-1).
\nonumber
\end{gather}
In fact, the other genus~1 case $k=4$ admits a parametrisation by modular units (of level $N=20$),
so that Theorem~1 from~\cite{Zud14} applies to produce the evaluation
\begin{equation}
\m(Q_4)=4L'(E_{20},0), \quad\text{where}\; E_{20}:y^2=x^3+x^2+4x+4 \;\;\text{(conductor 20)},
\label{Q4}
\end{equation}
conjectured by Boyd in~\cite{Boy98}.

By analysing Boyd's Tables~2 and~10 in \cite{Boy98}, as well as Bosman's reduction of the hyperelliptic curve $C$ in \cite[Chap.~8]{Bos04},
one can recognise a clear link between the families $Q_k(X,Y)$ and $P_{2-k}(x,y)$.
In fact, the Jacobian of $C:Q_k(X,Y)=0$ generically splits into two elliptic curves, the first being birationally equivalent
to $E:P_{2-k}(x,y)=0$. This gives birth to a relation between the corresponding Mahler measures.

\begin{theorem}
\label{th1}
The following is true for real values of $k$:
\begin{equation*}
\m(Q_k)=\begin{cases}
2\m(P_{2-k}) & \text{for $0\le k\le4$}, \\
\phantom1\m(P_{2-k}) & \text{for $k\le-1$}.
\end{cases}
\end{equation*}
\end{theorem}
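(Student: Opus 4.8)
The plan is to reduce each two–dimensional Mahler integral to a one–dimensional one by Jensen's formula and then, after differentiating in $k$, to compare the resulting \emph{elliptic period integrals}. First I would integrate out the fibre variable. For $Q_k$ the leading coefficient in $Y$ is $1$ and the two roots satisfy $Y_1Y_2=X^4$, so $|Y_1Y_2|=1$ on the torus and Jensen gives $\m(Q_k)=\frac1{2\pi i}\int_{|X|=1}(\log^+|Y_1|+\log^+|Y_2|)\,\frac{\d X}{X}$. The polynomial $Q_k$ is stable under the involution $\sigma\colon(X,Y)\mapsto(1/X,\,Y/X^4)$, which preserves the torus $|X|=|Y|=1$; passing to the $\sigma$–invariants $u=X+1/X\in[-2,2]$ and $v=Y/X^2$ turns $Q_k=0$ into the quotient curve $v^2+(u^2+ku+2k-2)v+1=0$ and, writing $X=e^{i\theta}$, yields
\[
\m(Q_k)=\frac1\pi\int_{-2}^{2}F\bigl(u^2+ku+2k-2\bigr)\,\frac{\d u}{\sqrt{4-u^2}},
\]
where $F(B)=\log\frac{|B|+\sqrt{B^2-4}}2$ for $|B|\ge2$ and $F(B)=0$ otherwise. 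A parallel reduction for $P_{2-k}$ — whose leading coefficient $x+1$ contributes nothing and whose roots satisfy $y_1y_2=x$, and whose integrand is even under $x\mapsto1/x$ — gives, with $t=x+1/x$, an integral of exactly the same shape $\m(P_{2-k})=\frac1\pi\int_{-2}^{2}F(C(t))\,\frac{\d t}{\sqrt{4-t^2}}$ with $C(t)=(t+2-k)/\sqrt{t+2}$.

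A direct change of variables between these two $\log$–integrals is overdetermined, since one cannot match both $|B|$ and the arcsine measure at once; so instead I would differentiate in $k$. Because $F$ vanishes at the endpoints of the contributing sub–interval (where $|B|=2$), the moving–endpoint boundary terms drop out, and $\partial_k F=\pm(\text{rational})/\sqrt{B^2-4}$ collapses each derivative to a genuine elliptic period: schematically
\[
\frac{\d}{\d k}\m(Q_k)=\pm\frac1\pi\int\frac{\d u}{\sqrt{(2-u)(u-(2-k))(u^2+ku+2k)}},
\]
while the $P$–side, after clearing the factor $\sqrt{t+2}$, becomes a period of the quartic $\tau(4-\tau)\bigl(\tau^2-(2k+4)\tau+k^2\bigr)$ with $\tau=t+2$.

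The heart of the proof — and the step I expect to be hardest — is to identify these two periods via the explicit birational correspondence between the quotient curve and $P_{2-k}$ guaranteed by the stated splitting of $\operatorname{Jac}(C)$. The subtlety is that the factor $2$ versus $1$ is \emph{not} a property of the curves but of the integration cycle picked out by the unit torus: in the range $0\le k\le4$ the curve $Q_k$ contributes on the arc where $u^2+ku+2k-2>2$, whereas $P_{2-k}$ contributes on the arc where $C(t)<-2$ — these arcs sit on opposite branches of the respective quadratics — and one checks that the $Q$–cycle maps onto the $P$–cycle with multiplicity two, producing the factor $2$. For $k\le-1$ the branch points rearrange (the roots of $u^2+ku+2k$ become real and those of $\tau^2-(2k+4)\tau+k^2$ become complex), the contributing signs of $B$ and $C$ swap, and the same map becomes one–to–one on the relevant arc, giving the factor $1$; the excluded gaps $-1<k<0$ and $k>4$ are exactly where branch points cross the boundary $|B|=2$ and the multiplicity is in transition. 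Verifying that the substitution sends contour to contour with the correct multiplicity, and keeping orientation and signs straight, is the main obstacle.

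Finally I would integrate the matched derivative identity back up and fix the constant on each interval separately. On $[0,4]$ the natural base point is $k=0$, where $Q_0=(Y+1)(Y+X^4)$ and $P_2=(x+1)(y+1)(y+x)$ both factor and hence have zero Mahler measure, so the constant is $0$ and $\m(Q_k)=2\m(P_{2-k})$ throughout. On $(-\infty,-1]$ I would pin the constant down at $k=-1$ using the CM evaluation $\m(Q_{-1})=2L'(\chi_{-3},-1)$ recorded above together with the companion value of $\m(P_3)$ (or, alternatively, by analysing the $k\to-\infty$ asymptotics of both periods), after which the derivative identity $\frac{\d}{\d k}\m(Q_k)=\frac{\d}{\d k}\m(P_{2-k})$ propagates the equality across the whole ray.
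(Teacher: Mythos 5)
Your overall architecture coincides with the paper's: differentiate both Mahler measures in $k$, match the resulting elliptic periods, integrate back, and fix the constants at $k=0$ (where $Q_0$ and $P_2$ factor) and via the $k\to-\infty$ asymptotics. Your $Q$-side reduction reproduces the paper's Lemma~2, and your direct Jensen reduction of the $P$-side is a legitimate alternative to what the paper actually does there (the paper expands $\m(\wt P_k)$ as a constant-term series, invokes the Picard--Fuchs equation satisfied by $\sum_n z^n\sum_j{\binom nj}^2\binom{2j}j$, and imports hypergeometric evaluations from Rogers--Zudilin).

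The genuine gap is exactly the step you flag as ``the main obstacle'', and it is more serious than contour bookkeeping. Look at the real branch-point configurations of your two quartics. For $0<k<4$ the $Q$-side quartic $(2-u)\bigl(u-(2-k)\bigr)(u^2+ku+2k)$ has two real branch points and a complex-conjugate pair (the discriminant of $u^2+ku+2k$ is $k(k-8)<0$), while the $P$-side quartic $\tau(4-\tau)\bigl(\tau^2-(2k+4)\tau+k^2\bigr)$ has four real branch points (discriminant $16(k+1)>0$); for $k<-1$ the two configurations swap. A real M\"obius substitution preserves the number of real branch points, so no change of variables of the kind you envisage can send one period onto the other: the identification necessarily passes through a degree-two (Landen/Gauss-type) transformation, i.e., through the $2$-isogeny underlying the splitting of $\operatorname{Jac}(C)$, and the factors $2$ and $1$ are produced by that isogeny's effect on period lattices, not by an arc covering an arc with multiplicity two. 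Your heuristic points at the right phenomenon but does not prove it, and the bookkeeping is delicate (note that the side carrying the complex pair switches between the two regimes while the factor switches from $2$ to $1$, so the answer is not read off from the configurations alone). This is precisely where the paper concentrates its effort: it brings both derivatives to the \emph{same} quartic $-(v+4)(v^2+k(k-4)v+4k^2)$ --- on the $Q$-side by the substitution $t=(v+2k^2-2k)/(v-4k)$, and on the $P$-side via the ${}_2F_1\bigl(\frac13,\frac23;1;\cdot\bigr)$ representation together with, for $k<-1$, Ramanujan's signature-$3$ to signature-$2$ transformation (Lemma~4) --- after which the constants $2$ versus $1$ fall out of the normalisations and of taking real parts over the correct ranges. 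Until you exhibit the degree-two transformation (or an equivalent hypergeometric identity) and verify the multiplicity separately on $0<k\le4$ and $k<-1$, the proof is incomplete; the remaining ingredients of your outline (vanishing endpoint terms, $c_+=0$ at $k=0$, $c_-=0$ from the asymptotics) are sound and agree with the paper.
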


Bosman comments in \cite{Bos04} on his proof of evaluations of $\m(Q_2)$,
$\m(Q_{-1})$ and $\m(Q_8)$:
\begin{quote}
``Although I succeeded in proving these identities, I still don't understand why such identities must
be valid. These proofs give a deduction which one can verify step-by-step but they seem to exist
coincidentally. It is not at all clear how the arithmetic structure of the polynomials is related to
the arithmetic structure of the $L$-series. We will rewrite the integral to so-called dilogarithm sums
and these we will rewrite in terms of $L$-series. It is an open problem whether this is always possible.
In the elliptic curve case it is not clear at all how to prove a relation between the dilogarithm sums
and the $L$-series, except for a few instances.''
\end{quote}
We believe that Theorem~\ref{th1} and its proof below provides us with at least a partial
understanding of the magic behind the $L$-series evaluations of Mahler measures. The results
in \cite{Mel09,RV99,RZ12} and the theorem also lead to different proofs of the formulae for $\m(Q_2)$ and $\m(Q_{-1})$,
as well as to four more equalities conjectured by Boyd in~\cite{Boy98}.

\begin{theorem}
\label{th2}
The evaluations \eqref{Q2}, \eqref{Q4} and
\begin{alignat*}{2}
\m(Q_1)&=2\m(P_1)=2L'(E_{14},0), &\quad&\text{where}\; E_{14}:y^2+xy+y=x^3-x,
\\
\m(Q_{-2})&=\m(P_4)=3L'(\hat E_{20},0), &\quad&\text{where}\; \hat E_{20}:y^2=x^3+x^2-x,
\\
\m(Q_{-4})&=\m(P_6)=2L'(\hat E_{36},0), &\quad&\text{where}\; \hat E_{36}:y^2=x^3-15x+22,
\\
\m(Q_{-8})&=\m(P_{10})=10L'(\hat E_{14},0), &\quad&\text{where}\; \hat E_{14}:y^2+xy+y=x^3-11x+12,
\end{alignat*}
are valid.
\end{theorem}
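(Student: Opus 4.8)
The plan is to obtain Theorem~\ref{th2} as a direct consequence of Theorem~\ref{th1} together with the elliptic evaluations of $\m(P_k)$ already established in~\cite{Mel09,RV99,RZ12}. For each admissible $k$ I would first pass from $\m(Q_k)$ to a multiple of $\m(P_{2-k})$ using Theorem~\ref{th1}---recording whether the prefactor is $2$ (when $0\le k\le4$) or $1$ (when $k\le-1$)---and then substitute the known closed form for $\m(P_{2-k})$.

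Concretely, the cases $k=1,2,4$ all satisfy $0\le k\le4$, so Theorem~\ref{th1} gives $\m(Q_k)=2\m(P_{2-k})$ with $2-k=1,0,-2$; the cases $k=-2,-4,-8$ satisfy $k\le-1$, so $\m(Q_k)=\m(P_{2-k})$ with $2-k=4,6,10$. Each target thereby reduces to one of $\m(P_0),\m(P_1),\m(P_{-2}),\m(P_4),\m(P_6),\m(P_{10})$, and all six of these indices lie among the seven resolved instances $\{0,6,1,10,-5,4,-2\}$: conductor $36$ for $\{0,6\}$ by Rodriguez-Villegas~\cite{RV99}, conductor $14$ for $\{1,10,-5\}$ by Mellit~\cite{Mel09}, and conductor $20$ for $\{4,-2\}$ by Rogers and the second author~\cite{RZ12}. (The value $-5$ is the lone unused instance, precisely because its companion $k=2-(-5)=7$ lies outside the range covered by Theorem~\ref{th1}.) Inserting each quoted evaluation and multiplying by the prefactor from Theorem~\ref{th1} yields all six displayed identities; in particular $k=2$ and $k=4$ recover~\eqref{Q2} and~\eqref{Q4} on an independent footing.

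Since the analytic content sits entirely in Theorem~\ref{th1} and in the cited results, the remaining work is purely a matter of matching normalisations, and I expect no serious obstacle. The one point to check is that the rational multiplier attached to each $\m(P_{2-k})$ in~\cite{Mel09,RV99,RZ12}, when combined with the factor $1$ or $2$ from Theorem~\ref{th1}, reproduces exactly the constants $1,2,3,4,10$ in the statement, and that the Weierstrass models quoted there agree---up to isogeny, which does not alter $L(E,s)$---with the models named in the theorem. For instance $\m(Q_1)=2\m(P_1)=2L'(E_{14},0)$ follows from Mellit's $\m(P_1)=L'(E_{14},0)$, and $\m(Q_{-2})=\m(P_4)=3L'(\hat E_{20},0)$ from the Rogers--Zudilin value $\m(P_4)=3L'(\hat E_{20},0)$, so that no new regulator or transcendence computation is required beyond what Theorem~\ref{th1} already provides.
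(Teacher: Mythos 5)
Your proposal is correct and is essentially identical to the paper's own (one-line) proof: combine Theorem~\ref{th1} with the known evaluations of $\m(P_0)$, $\m(P_{-2})$, $\m(P_1)$, $\m(P_4)$, $\m(P_6)$ and $\m(P_{10})$ from \cite{Mel09,RV99,RZ12}, matching the prefactor $2$ for $0\le k\le4$ and $1$ for $k\le-1$. The bookkeeping of which six $P$-values are needed and which cited paper supplies each is exactly as the authors intend.
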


\section{Bosman's family of genus 2 curves}
\label{s-bosman}

The polynomials
$$
Q_k(X,Y)=Y^2+(X^4+kX^3+2kX^2+kX+1)Y+X^4
$$
define generically the family of curves $Q_k(X,Y)=0$ of genus~2. The split of its
Jacobian into two families of elliptic curves correspond to a reduction of hyperelliptic integrals,
a phenomenon first observed in general settings by Goursat~\cite{Gou85}.
Since
$Q_k(X,Y)=X^4\wt Q_k(X,Y/X^2)$ where
$$
\wt Q_k(X,Y)=Y^2+\biggl(\biggl(X+\frac1X\biggr)^2+k\biggl(X+\frac1X\biggr)+2k-2\biggr)Y+1,
$$
we conclude that $\m(Q_k(X,Y))=\m(\wt Q_k(X,Y))$. We now investigate where the curve $\wt Q_k(X,Y)=0$
cuts the torus $|X|=|Y|=1$.

On $|X|=1$, both
$$
B(X):=B_k(X)=\biggl(X+\frac1X\biggr)^2+k\biggl(X+\frac1X\biggr)+2k-2
$$
and
\begin{align}
\Delta(X)
&:=\Delta_k(X)=B(X)^2-4
\nonumber\\
&\phantom:=\biggl(\biggl(X+\frac1X\biggr)^2+k\biggl(X+\frac1X\biggr)+2k\biggr)
\biggl(X+\frac1X+2\biggr)\biggl(X+\frac1X+k-2\biggr)
\label{Delta}
\end{align}
are real-valued. Then
$$
\wt Q_k(X,Y)=Y^2+B(X)Y+1=(Y-Y_1(X))(Y-Y_2(X)),
$$
where
$$
Y_1(X),Y_2(X)=\frac{-B(X)\pm\sqrt{\Delta(X)}}2.
$$
By Vi\`ete's theorem $Y_1(X)Y_2(X)=1$; therefore, we get $|Y_1(X)|=|Y_2(X)|=1$ for $\Delta(X)\le0$.
If $\Delta(X)>0$, then both zeroes $Y_1(X)$ and $Y_2(X)$ are real of the same sign as $-B(X)$.
In this case we numerate them in such a way that
$$
|Y_2(X)|<1<|Y_1(X)|;
$$
in other words,
$$
Y_1(X)=\frac{-B(X)-\sign(B(X))\sqrt{\Delta(X)}}2.
$$

\begin{lemma}
\label{lem1}
If $\Delta(X)>0$ on the unit circle, then signs of $B(X)$ and $k$ coincide.
\end{lemma}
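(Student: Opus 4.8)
The plan is to push the whole question through the real substitution $t=X+\tfrac1X$, which ranges over the interval $[-2,2]$ as $X$ traverses the unit circle (writing $X=e^{i\theta}$ gives $t=2\cos\theta$). In this variable $B=t^2+kt+2k-2$, and the two shifts factor conveniently as
$$
B+2=t^2+k(t+2), \qquad B-2=(t+2)(t+k-2),
$$
the product $(t+2)(t+k-2)$ being precisely the last two factors of $\Delta$ in \eqref{Delta}. Since $\Delta=B^2-4=(B-2)(B+2)$, the hypothesis $\Delta>0$ is exactly $|B|>2$, so $B\ne0$ and its sign is well defined. First I would clear away the endpoint $t=-2$: there $B=2$ and hence $\Delta=0$, so $\Delta>0$ forces $t>-2$, giving the strict inequality $t+2>0$ on which everything below rests.

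I would then split on the sign of $B$. If $B>2$, then $B-2>0$; since $t+2>0$, the factorisation of $B-2$ gives $t+k-2>0$, i.e.\ $k>2-t\ge0$ because $t\le2$, so $k>0$ and the signs agree. If instead $B<-2$, then $B+2<0$, so $t^2+k(t+2)<0$, whence $k(t+2)<-t^2\le0$; dividing by $t+2>0$ yields $k<-t^2/(t+2)\le0$, so $k<0$ and again the signs agree. As $|B|>2$ admits no third case, this settles every point of the circle at which $\Delta>0$.

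The one step that needs care is the case $B<-2$: the factorisation of $B-2$ alone is too weak there, since it only delivers $k<2-t$, which for $t$ near $-2$ is compatible with $k>0$; one must instead use $B+2=t^2+k(t+2)$ and the bound $t^2\ge0$ to pin down the sign of $k$. Finally I would note that $k=0$ is consistent but vacuous: then $\Delta=t^2(t^2-4)\le0$ on all of $[-2,2]$, so the hypothesis is never met.
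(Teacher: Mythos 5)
Your proof is correct and rests on the same two factorisations the paper uses, namely $B+2=t^2+k(t+2)$ and $B-2=(t+2)(t+k-2)$ with $t=X+\frac1X\in[-2,2]$; the only difference is that you case-split on the sign of $B$ and deduce the sign of $k$, whereas the paper splits on the sign of $k$ and deduces the sign of $B$. This is essentially the same argument, and your handling of the endpoint $t=-2$ and of the $B<-2$ case is sound.
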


\begin{proof}
Taking $X=e^{i\theta}$, where $-\pi<\theta<\pi$, so that $X+1/X=2\cos\theta$ is between $-2$ and~$2$,
we see from \eqref{Delta} that the sign of $\Delta(X)$ is fully controlled by the signs of
\begin{gather*}
B(X)+2=\biggl(X+\frac1X\biggr)^2+k\biggl(X+\frac1X\biggr)+2k=4\cos^2\theta+2k(1+\cos\theta)
\\ \text{and}\quad
X+\frac1X+k-2=2\cos\theta+k-2.
\end{gather*}

If $k>0$, the former expression is strictly positive; hence $\Delta(X)=(B(X)-2)\*(B(X)+2)>0$ implies
$B(X)-2>0$, in particular, $B(X)>0$.

If $k<0$, then $2\cos\theta+k-2<0$, so that $\Delta(X)>0$ is equivalent to $B(X)+2<0$, hence $B(X)<0$.
\end{proof}

Using Jensen's formula and the symmetry $Y_1(X)=Y_1(X^{-1})$, we get
\begin{align*}
\tilde g(k)
&=\m(Q_k(X,Y))=\m(\wt Q_k(X,Y))
\\
&=\frac1{(2\pi i)^2}\iint_{|X|=|Y|=1}\log|\wt Q_k(X,Y)|\,\frac{\d X}X\,\frac{\d Y}Y
\displaybreak[2]\\
&=\frac1{2\pi i}\int_{|X|=1}\log|Y_1(X)|\,\frac{\d X}X
\displaybreak[2]\\
&=\frac1{\pi i}\int_{\substack{|X|=1\\\Im X>0}}\Re\log Y_1(X)\,\frac{\d X}X
\displaybreak[2]\\
&=\frac1\pi\,\Re\int_0^\pi\log Y_1(e^{i\theta})\,\d\theta
\\
&=\frac1\pi\,\Re\int_0^\pi\log\frac{B(e^{i\theta})+\sign(k)\sqrt{\Delta(e^{i\theta})}}2\,\d\theta,
\quad k\ne0.
\end{align*}

\begin{lemma}
\label{lem2}
For $k<-1$ and $k>0$,
$$
\frac{\d\tilde g(k)}{\d k}
=\frac{\sign(k)}\pi\,\Re\int_{-\infty}^{k(3-k)}\frac{\d v}{\sqrt{-(v+4)(v^2+k(k-4)v+4k^2)}}.
$$
\end{lemma}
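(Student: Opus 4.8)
The plan is to differentiate the last integral representation of $\tilde g(k)$ with respect to $k$ under the integral sign and then reduce the resulting elliptic integral to the stated cubic form by an explicit change of variables.

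First I would differentiate
$$\tilde g(k)=\frac1\pi\,\Re\int_0^\pi\log\frac{B+\sign(k)\sqrt{\Delta}}2\,\d\theta$$
in $k$, treating $\sign(k)$ as locally constant (legitimate for $k<-1$ and $k>0$). Writing $W=\tfrac12(B+\sign(k)\sqrt{\Delta})$ and using $\Delta=B^2-4$ together with $\partial B/\partial k=X+1/X+2=2\cos\theta+2$, a short computation gives
$$\frac{\partial}{\partial k}\log W=\frac{W^{-1}}2\,\frac{\partial B}{\partial k}\Bigl(1+\frac{\sign(k)B}{\sqrt\Delta}\Bigr)=\frac{\sign(k)(2\cos\theta+2)}{\sqrt\Delta},$$
the cross terms collapsing because $B^2-\Delta=4$. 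Hence $\d\tilde g/\d k=\frac{\sign(k)}\pi\,\Re\int_0^\pi\frac{2\cos\theta+2}{\sqrt\Delta}\,\d\theta$. The genuinely delicate point is the justification of differentiation under the integral sign: the $k$-derivative of the integrand has a $\Delta^{-1/2}$ singularity at the (moving) zeros of $\Delta$ on the circle, so I would dominate it uniformly on a neighbourhood of each fixed $k$ and invoke dominated convergence. This is the main obstacle to a fully rigorous argument.

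Next I would pass to the algebraic variable $u=X+1/X=2\cos\theta$, for which $\d\theta=-\d u/\sqrt{4-u^2}$ and $2\cos\theta+2=u+2$. Using the factorisation \eqref{Delta}, namely $\Delta=(u^2+ku+2k)(u+2)(u+k-2)$, the numerator $u+2$ cancels against the factors $\sqrt{u+2}$ arising from $\Delta$ and from $\sqrt{4-u^2}=\sqrt{(2-u)(2+u)}$, and the integral becomes
$$\frac{\d\tilde g}{\d k}=\frac{\sign(k)}\pi\,\Re\int_{-2}^{2}\frac{\d u}{\sqrt{(2-u)(u+k-2)(u^2+ku+2k)}}.$$
Write $R(u)=(2-u)(u+k-2)(u^2+ku+2k)$ for the quartic under the root.

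Finally I would reduce this quartic (genus-$1$) integral to the cubic form of the statement. Since the target integral runs to $-\infty$, the substitution must send an endpoint to infinity, which points to the Möbius substitution sending $u=2$ to infinity; explicitly I would set $v=\dfrac{4k(u+k-1)}{u-2}$ (equivalently $t=1/(u-2)$ followed by an affine normalisation). One checks that this carries $\d u/\sqrt{R(u)}$ to $-\,\d v/\sqrt{S(v)}$ with $S(v)=-(v+4)(v^2+k(k-4)v+4k^2)$, and that it maps $u=-2\mapsto v=k(3-k)$ and $u=2\mapsto v=-\infty$; the two sign reversals (from $-\d v$ and from reversing the limits) cancel, producing exactly $\Re\int_{-\infty}^{k(3-k)}\d v/\sqrt{S(v)}$. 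To handle the real part correctly I would verify, in each range ($0<k<4$, $k>4$, and $k<-1$, with $k=-1$ the threshold at which $u=2$ becomes a root of $u^2+ku+2k$), that this real substitution maps the locus $R(u)>0$ bijectively onto $S(v)>0$; for instance when $0<k<4$ it sends $(2-k,2)$ onto $(-\infty,-4)$, so the vanishing imaginary contributions match on both sides. The remaining work is the routine but sign-sensitive bookkeeping of the square-root branches under the substitution.
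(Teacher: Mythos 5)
Your proposal is correct and follows essentially the same route as the paper: differentiate under the integral sign, pass to the algebraic variable $t=\cos\theta$ (your $u=2t$), and apply the M\"obius substitution --- your $v=4k(u+k-1)/(u-2)$ is precisely the inverse of the paper's $t=(v+2k^2-2k)/(v-4k)$, with the same endpoint images $u=-2\mapsto k(3-k)$ and $u=2\mapsto-\infty$. The only difference is that you explicitly flag the justification of differentiating through the integrable $\Delta^{-1/2}$ singularity, a point the paper passes over in silence.
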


\begin{proof}
We have
\begin{align*}
\frac{\d}{\d k}\,\log\frac{B\pm\sqrt{\Delta}}2
&=\frac{\d}{\d B}\,\log\frac{B\pm\sqrt{B^2-4}}2\cdot\frac{\d B}{\d k}
=\mp\frac1{\sqrt{B^2-4}}\cdot\biggl(X+\frac1X+2\biggr)
\\
&=\mp\frac1{\sqrt{\Delta}}\cdot\biggl(X+\frac1X+2\biggr).
\end{align*}
Therefore,
$$
\frac{\d\tilde g(k)}{\d k}
=-\frac{\sign(k)}\pi\,\Re\int_0^\pi\frac{2(\cos\theta+1)\,\d\theta}{\sqrt{\Delta_k(e^{i\theta})}}.
$$
Changing to $t=\cos\theta=(X+1/X)/2$ we can write the result as
\begin{align*}
\frac{\d\tilde g(k)}{\d k}
&=\frac{\sign(k)}\pi\,\Re\int_{-1}^1\frac{t+1}{\sqrt{(2t^2+kt+k)(t+1)(2t+k-2)}}\,\frac{\d t}{\sqrt{1-t^2}}
\\
&=\frac{\sign(k)}\pi\,\Re\int_{-1}^1\frac{\d t}{\sqrt{(1-t)(2t^2+kt+k)(2t+k-2)}}.
\end{align*}
For $k<-1$ and $k>0$ we further employ the substitution $t=(v+2k^2-2k)/(v-4k)$ to get the desired form.
\end{proof}

We remark that for $k>4$ the formula of Lemma~\ref{lem2} translates into
$$
\frac{\d\tilde g(k)}{\d k}
=\frac1\pi\int_{-\infty}^{k(3-k)}\frac{\d v}{\sqrt{-(v+4)(v^2+k(k-4)v+4k^2)}}.
$$
The result is an \emph{incomplete} elliptic integral, and this fact creates a natural obstruction
for having no relations between the two Mahler measures in Theorem~\ref{th1} when $k>4$.

\section{Boyd's family of elliptic curves}
\label{s-boyd}

Let us now turn our attention to an ``easier'' object\,---\,the family $P_k(x,y)=0$ of elliptic curves (generically), where
\begin{align*}
P_k(x,y)
&=(x+1)y^2+(x^2+kx+1)y+(x^2+x)
\\
&=(x+1)(y+1)(x+y)-(2-k)xy.
\end{align*}
Denote $g(k)=\m(P_{2-k}(x,y))$. Since $P_k(x^2,y^2)=x^2y^2\wt P_{2-k}(x,y)$ where
$$
\wt P_k(x,y)=\biggl(x+\frac1x\biggr)\biggl(y+\frac1y\biggr)\biggl(\frac xy+\frac yx\biggr)-k,
$$
we also have $g(k)=\m(\wt P_k(x,y))$. Then
$$
\wt P_{-1}(x,y)=\biggl(x+\frac1x\biggr)\biggl(y+\frac1y\biggr)\biggl(\frac xy+\frac yx\biggr)+1
$$
is real-valued and ranges between $-7$ and $9$ on the torus $|x|=|y|=1$;
therefore, for $|k|>9$ we can write the Mahler measure
\begin{align*}
\m(\wt P_k(x,y))
&=\frac1{(2\pi i)^2}\iint_{|x|=|y|=1}\log|k+1-\wt P_{-1}(x,y)|\,\frac{\d x}x\,\frac{\d y}y
\\
&=\log|k+1|+\frac1{(2\pi i)^2}\iint_{|x|=|y|=1}\Re\log\biggl(1-\frac{\wt P_{-1}(x,y)}{k+1}\biggr)\,\frac{\d x}x\,\frac{\d y}y
\displaybreak[2]\\
&=\log|k+1|-\Re\sum_{n=1}^\infty\frac1n\,\frac1{(2\pi i)^2}\iint_{|x|=|y|=1}\frac{\wt P_{-1}(x,y)^n}{(k+1)^n}\,\frac{\d x}x\,\frac{\d y}y
\\
&=\Re\biggl(\log(k+1)-\sum_{n=1}^\infty\frac1n\,\frac{\CT(\wt P_{-1}(x,y)^n)}{(k+1)^n}\biggr),
\end{align*}
where the constant term
$$
\CT(\wt P_{-1}(x,y)^n)
=\sum_{\substack{j,\ell,m\ge0\\j+\ell+m=n}}\biggl(\frac{n!}{j!\,\ell!\,m!}\biggr)^2
=\sum_{j=0}^n{\binom nj}^2\binom{2j}j.
$$
Differentiating the resulting expression for $|k|>9$, we obtain
\begin{align*}
\frac{\d g(k)}{\d k}
=\Re\sum_{n=0}^\infty(k+1)^{-n+1}\sum_{j=0}^n{\binom nj}^2\binom{2j}j.
\end{align*}
The series
$$
f(z)=\sum_{n=0}^\infty z^n\sum_{j=0}^n{\binom nj}^2\binom{2j}j
$$
satisfies the Picard--Fuchs differential equation \cite{Ber01,Ver01}
$$
z(z-1)(9z-1)\frac{\d^2f}{\d z^2}+(27z^2-20z+1)\frac{\d f}{\d z}+3(3z-1)f=0,
$$
whose singularities are at $z=0,1/9,1$ and $\infty$. Therefore, our result for $\d g(k)/\d k$
can be extended on the real line by continuity to the intervals $k<-1$ and $k>8$ using
$$
f(z)=\sum_{m=0}^\infty\frac{(3m)!}{m!^3}\,\frac{z^{2n}(1-z)^n}{(1-3z)^{3n+1}}
=\frac1{1-3z}\,{}_2F_1\biggl(\begin{matrix} \frac13, \, \frac23 \\ 1 \end{matrix}\biggm| \frac{27z^2(1-z)}{(1-3z)^3} \biggr)
\quad\text{for}\;\; z<0
$$
and
$$
f(z)=\sum_{m=0}^\infty\frac{(3m)!}{m!^3}\,\frac{z^n(1-z)^{2n}}{(1+3z)^{3n+1}}
=\frac1{1+3z}\,{}_2F_1\biggl(\begin{matrix} \frac13, \, \frac23 \\ 1 \end{matrix}\biggm| \frac{27z(1-z)^2}{(1+3z)^3} \biggr)
\quad\text{for}\;\; 0<z<\frac19,
$$
respectively (see, e.g., \cite[Section~3]{BSWZ12}). Namely, we have
\begin{equation}
\frac{\d g(k)}{\d k}
=\frac1{k-2}\,{}_2F_1\biggl(\begin{matrix} \frac13, \, \frac23 \\ 1 \end{matrix}\biggm| \frac{27k}{(k-2)^3} \biggr)
\quad\text{for}\;\; k<-1
\label{S2}
\end{equation}
and
\begin{equation}
\frac{\d g(k)}{\d k}
=\frac1{k+4}\,{}_2F_1\biggl(\begin{matrix} \frac13, \, \frac23 \\ 1 \end{matrix}\biggm| \frac{27k^2}{(k+4)^3} \biggr)
\quad\text{for}\;\; k>8.
\label{S3}
\end{equation}
In fact, the latter formula (and slightly more) are shown in \cite[Section~4.2]{RZ12} to be true for $2<k<8$.

\begin{lemma}
\label{lem3}
For $0<k<8$,
\begin{equation*}
\frac{\d g(k)}{\d k}
=\frac1{2\pi}\int_0^1\frac{\d t}{\sqrt{t(1-t)(k^2t^2+(4-k)kt+4)}}.
\end{equation*}
\end{lemma}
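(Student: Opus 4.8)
The plan is to compute $\d g(k)/\d k$ directly, mirroring the one-variable (Jensen) reduction carried out for the $Q$-family in Section~\ref{s-bosman}, and only at the very end to reconcile the resulting complete elliptic integral with the one in the statement. First I would treat $\wt P_k(x,y)$ as a Laurent polynomial in $y$: clearing $y^2$ gives
\[
y^2\wt P_k(x,y)=\Bigl(x+\tfrac1x\Bigr)x^{-1}y^4+\Bigl(\bigl(x+\tfrac1x\bigr)^2-k\Bigr)y^2+\Bigl(x+\tfrac1x\Bigr)x,
\]
so that $\wt P_k=(x+\tfrac1x)x^{-1}y^{-2}(y^2-w_1)(y^2-w_2)$ with $w_1w_2=x^2$. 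Applying Jensen's formula in $y$, using $\tfrac1{2\pi}\int_0^{2\pi}\log|e^{2i\beta}-w|\,\d\beta=\log^+|w|$ and $\tfrac1{2\pi}\int_0^{2\pi}\log|2\cos\alpha|\,\d\alpha=0$, I would obtain $g(k)=\tfrac1{2\pi}\int_0^{2\pi}(\log^+|w_1|+\log^+|w_2|)\,\d\alpha$ with $x=e^{i\alpha}$. Because $|w_1w_2|=1$, only those arcs contribute on which the discriminant $\Phi:=((x+\tfrac1x)^2-k)^2-4(x+\tfrac1x)^2$ is positive, and there the integrand equals $\log|w_1|$ for the root of larger modulus.

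Next, writing $a=x+\tfrac1x=2\cos\alpha$ I would record the factorisation $\Phi=[(k+1)-(a+1)^2][(k+1)-(a-1)^2]$ and check that for $0<k<8$ the inequality $\Phi>0$ on the unit circle is equivalent to $a^2<(\sqrt{k+1}-1)^2$, the complementary branch $a^2>(\sqrt{k+1}+1)^2$ being empty since $|a|\le2$. Differentiation under the integral sign is legitimate here — $1/\sqrt\Phi$ has only integrable square-root singularities and the boundary terms vanish because $\log|w_1|\to0$ as $\Phi\to0$ — and a short computation gives $\d(\log|w_1|)/\d k=1/\sqrt\Phi$. Hence
\[
\frac{\d g(k)}{\d k}=\frac1{2\pi}\int_{\{\Phi>0\}}\frac{\d\alpha}{\sqrt\Phi}.
\]
Exploiting that the integrand depends on $\cos\alpha$ only through $\cos^2\alpha$, I would fold the four symmetric arcs together and substitute $s=\cos^2\alpha$, turning $\Phi$ into $(k-4s)^2-16s=16(s-s_-)(s-s_+)$ with $s_\pm=(\sqrt{k+1}\pm1)^2/4$ and $0<s_-<1<s_+$. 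This yields
\[
\frac{\d g(k)}{\d k}=\frac1{4\pi}\int_0^{s_-}\frac{\d s}{\sqrt{s(1-s)(s_--s)(s_+-s)}}.
\]

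The final and, I expect, the only genuinely delicate step is to identify this with $\tfrac1{2\pi}\int_0^1\d t/\sqrt{t(1-t)(k^2t^2+(4-k)kt+4)}$. The obstacle is structural: on $0<k<8$ the discriminant $k(k-8)$ of $k^2t^2+(4-k)kt+4$ is negative, so the target integrand has the two real branch points $t=0,1$ together with a complex-conjugate pair, whereas the integral just produced has four real branch points. No real fractional-linear change of variable can interchange these configurations; they are linked by a quadratic (Landen) transformation — equivalently by a $2$-isogeny of the two underlying elliptic curves — and this is exactly what manufactures the factor $2$. Concretely I would reduce both complete integrals to Legendre normal form $K(\kappa)$: the distances from $t=0$ and $t=1$ to the complex roots are $2/k$ and $2\sqrt{k+1}/k$, and feeding these into the standard two-real-root reduction returns the \emph{same} modulus $\kappa^2=(\sqrt{k+1}-1)^3(\sqrt{k+1}+3)/(16\sqrt{k+1})$ as the four-real-root reduction of the previous integral, while the two prefactors come out in the ratio $2:1$. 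This gives $\int_0^{s_-}(\cdots)=2\int_0^1(\cdots)$ and hence the claimed identity. As a check on the constant I would verify the transparent limit $k\to0^+$, where both sides of the lemma reduce to $\tfrac14$.
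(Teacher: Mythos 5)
Your argument is correct, but it takes a genuinely different (and much longer) route than the paper, which disposes of Lemma~\ref{lem3} in one line: the identity is proved in \cite[Section~4.2]{RZ12} for $2<k<8$ (as a by-product of the hypergeometric evaluation \eqref{S3}), and since both sides are continuous on $0<k<8$ the formula extends by continuity. You instead rederive everything from first principles, in exact parallel with the treatment of $\tilde g$ in Section~\ref{s-bosman}: Jensen's formula in $y$ applied to $y^2\wt P_k$, the factorisation $\Phi=\bigl((k+1)-(a+1)^2\bigr)\bigl((k+1)-(a-1)^2\bigr)$ with $a=2\cos\alpha$, the contributing region $a^2<(\sqrt{k+1}-1)^2$ (on which $a^2<k$, so the sign in $\d(\log|w_1|)/\d k=1/\sqrt\Phi$ does come out positive), and the reduction to $\frac1{4\pi}\int_0^{s_-}\d s/\sqrt{s(1-s)(s_--s)(s_+-s)}$ with $s_\pm=(\sqrt{k+1}\pm1)^2/4$ --- I have checked all of this, including the final step: both complete integrals reduce to Legendre form with the \emph{same} modulus $\kappa^2=(\sqrt{k+1}-1)^3(\sqrt{k+1}+3)/(16\sqrt{k+1})$ and with prefactors $2(k+1)^{-1/4}$ and $(k+1)^{-1/4}$ (your distances $2/k$ and $2\sqrt{k+1}/k$ to the complex roots are right), giving the ratio $2:1$ and hence the lemma; a numerical check at $k=4$ confirms it. Your approach buys self-containedness and makes the comparison in Lemma~\ref{lem5} more transparent, since $\d g/\d k$ and $\d\tilde g/\d k$ are then obtained by the same mechanism; the paper's citation buys brevity and keeps the connection to \eqref{S3}. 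Two minor points to tidy up before this could stand as a written proof: the last step should actually display the two standard Legendre reductions rather than assert them, and your aside that the factor $2$ is ``manufactured by a Landen transformation'' is a red herring --- the two period integrals end up with identical moduli, not Landen-conjugate ones, so no quadratic transformation is ultimately invoked, only the $2:1$ ratio of the prefactors.
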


\begin{proof}
This integral representation is established in \cite[Section~4.2]{RZ12} for the range $2<k<8$. Since the both sides
are continuous on the broader interval $0<k<8$, the lemma follows.
\end{proof}

\begin{lemma}
\label{lem4}
For $k<-1$,
$$
\frac{\d g(k)}{\d k}
=-\frac{1}{2\pi}\int_0^1\frac{p(1+p)\,\d t}{\sqrt{t(1-t)(1+2p-p^3(2+p)t)}}
\quad\text{for}\;\; k=-\frac2{p(1+p)},
$$
where $0<p<1$.
\end{lemma}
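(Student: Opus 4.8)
The plan is to start from the closed form~\eqref{S2} for $\d g(k)/\d k$ on $k<-1$ and to convert its hypergeometric function of signature~$3$ into a complete elliptic integral; the substitution $k=-2/(p(1+p))$ is exactly what renders this conversion algebraic. First I record that $p\mapsto k=-2/(p(1+p))$ is a decreasing bijection of $(0,1)$ onto $(-\infty,-1)$, with
$$
k-2=-\frac{2(1+p+p^2)}{p(1+p)},\qquad\text{so that}\qquad \frac1{k-2}=-\frac{p(1+p)}{2(1+p+p^2)}.
$$
A direct computation then collapses the hypergeometric argument in~\eqref{S2} to
$$
\frac{27k}{(k-2)^3}=\frac{27\,p^2(1+p)^2}{4\,(1+p+p^2)^3}.
$$

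Next I apply a cubic transformation relating the hypergeometric functions of signatures~$3$ and~$2$ (of Goursat type; see, e.g., \cite{BSWZ12}), namely
$$
{}_2F_1\biggl(\begin{matrix} \frac13, \, \frac23 \\ 1 \end{matrix}\biggm| \frac{27\,p^2(1+p)^2}{4\,(1+p+p^2)^3} \biggr)
=\frac{1+p+p^2}{\sqrt{1+2p}}\;{}_2F_1\biggl(\begin{matrix} \frac12, \, \frac12 \\ 1 \end{matrix}\biggm| \frac{p^3(2+p)}{1+2p} \biggr).
$$
Writing $\lambda:=p^3(2+p)/(1+2p)$, both arguments vanish at $p=0$ and increase to $1$ as $p\to1^-$, with $\lambda\in(0,1)$ throughout, so the right-hand side is a genuine convergent period. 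The prefactor $-p(1+p)/\bigl(2(1+p+p^2)\bigr)$ inherited from~\eqref{S2} then combines with $(1+p+p^2)/\sqrt{1+2p}$ to leave exactly $-p(1+p)/\bigl(2\sqrt{1+2p}\bigr)$.

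Finally I express the surviving hypergeometric function through Euler's integral,
$$
{}_2F_1\biggl(\begin{matrix} \frac12, \, \frac12 \\ 1 \end{matrix}\biggm| \lambda \biggr)
=\frac1\pi\int_0^1\frac{\d t}{\sqrt{t(1-t)(1-\lambda t)}},
$$
and substitute $\lambda=p^3(2+p)/(1+2p)$. Pulling the factor $1/\sqrt{1+2p}$ inside the radical turns $1-\lambda t$ into $\bigl(1+2p-p^3(2+p)t\bigr)/(1+2p)$, cancels the remaining $\sqrt{1+2p}$, and yields precisely the integrand of the lemma, the overall minus sign being inherited from $1/(k-2)<0$ (equivalently from $\sign(k)=-1$).

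The main obstacle is the cubic transformation itself: identifying the correct algebraic relation between the two hypergeometric arguments and, above all, pinning down the prefactor $(1+p+p^2)/\sqrt{1+2p}$ so that every power of $p$ cancels cleanly against~\eqref{S2} and Euler's formula. I would establish it by checking that $\sqrt{1+2p}\,(1+p+p^2)^{-1}\,{}_2F_1(\tfrac13,\tfrac23;1;z(p))$ and ${}_2F_1(\tfrac12,\tfrac12;1;\lambda(p))$ satisfy the same second-order linear differential equation in $p$ (both being periods of the relevant elliptic family) and agree to first order at $p=0$, or alternatively by matching it to a known entry in Goursat's list of cubic transformations; the remaining manipulations are routine algebra.
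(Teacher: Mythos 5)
Your proposal is correct and follows essentially the same route as the paper: the cubic transformation you invoke is precisely Ramanujan's signature-$3$ transformation (cited in the paper from Berndt, \emph{Ramanujan's Notebooks, Part V}, p.~112, Theorem~5.6), which the paper likewise applies to \eqref{S2} under the substitution $k=-2/(p(1+p))$ and then converts via the same Euler integral. The only blemish is immaterial: $p\mapsto k=-2/(p(1+p))$ is an \emph{increasing}, not decreasing, bijection of $(0,1)$ onto $(-\infty,-1)$.
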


\begin{proof}
As in the proof of \cite[Lemma~9]{RZ12} we use Ramanujan's transformation \cite[p.~112, Theorem~5.6]{Be98}
\begin{align*}
\frac1{1+p+p^2}\,{}_2F_1\biggl(\begin{matrix} \frac13, \, \frac23 \\
1 \end{matrix}\biggm| \frac{27p^2(1+p)^2}{4(1+p+p^2)^3} \biggr)
&=\frac1{\sqrt{1+2p}}\,{}_2F_1\biggl(\begin{matrix} \frac12, \, \frac12 \\
1 \end{matrix}\biggm| \frac{p^3(2+p)}{1+2p} \biggr)
\\
&=\frac1{\pi}\int_0^1\frac{\d t}{\sqrt{t(1-t)(1+2p-p^3(2+p)t)}},
\end{align*}
valid for $0\le p<1$. The required formula follows from this transformation by choosing
$p=(\sqrt{1-8/k}-1)/2$ in \eqref{S2}.
\end{proof}

\section{Comparison}
\label{s-finale}

\begin{lemma}
\label{lem5}
For the derivatives of Mahler measures defined in Sections~\textup{\ref{s-bosman}} and~\textup{\ref{s-boyd}},
$$
\frac{\d\tilde g(k)}{\d k}
=\frac{\d g(k)}{\d k} \quad\text{if}\; k<-1,
\quad\text{and}\quad
\frac{\d\tilde g(k)}{\d k}
=2\frac{\d g(k)}{\d k} \quad\text{if}\; 0<k\le4.
$$
\end{lemma}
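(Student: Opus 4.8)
The plan is to prove both identities by reducing each derivative to a \emph{complete} elliptic integral and then matching the two through an explicit rational change of variables; the discrepancy between the factors $2$ and $1$ will arise solely from where the real locus of the integrand in Lemma~\ref{lem2} sits relative to the roots of the cubic under the radical.

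First I would examine the cubic $-(v+4)(v^2+k(k-4)v+4k^2)$ of Lemma~\ref{lem2}. Its quadratic factor has discriminant $k^3(k-8)$, negative for $0<k<8$ and positive for $k<-1$; hence the cubic has the single real root $v=-4$ in the first range and three real roots $\rho_-<-4<\rho_+$ in the second, the ordering following from the value $16(1+k)$ taken by the quadratic at $v=-4$. The integrand of Lemma~\ref{lem2} is real precisely where this cubic is positive, so taking $\Re$ confines the integration to the positivity interval containing $-\infty$, namely $(-\infty,-4)$ for $0<k<4$ and $(-\infty,\rho_-)$ for $k<-1$. The upper cut-off $k(3-k)$ lies outside that interval — one checks that the quadratic at $v=k(3-k)$ equals $k^2(k+1)$, placing $k(3-k)$ in $(\rho_-,-4)$ when $k<-1$ and beyond $-4$ when $0<k<4$ — and on the leftover stretch the integrand is purely imaginary and drops out under $\Re$.

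For $0<k\le4$ a single substitution $v=-4/t$ then finishes the argument: it maps $(-\infty,-4)$ onto $(0,1)$ and, since $\sign(k)=1$, transforms
$$
\frac1\pi\,\frac{\d v}{\sqrt{-(v+4)(v^2+k(k-4)v+4k^2)}}
$$
directly into
$$
\frac1\pi\,\frac{\d t}{\sqrt{t(1-t)(k^2t^2+(4-k)kt+4)}},
$$
which is exactly twice the integrand $\tfrac1{2\pi}\,\d t/\sqrt{t(1-t)(k^2t^2+(4-k)kt+4)}$ of Lemma~\ref{lem3}; integrating over $(0,1)$ gives $\d\tilde g/\d k=2\,\d g/\d k$. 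The endpoint $k=4$, where $k(3-k)=-4$, is automatically included because the discarded imaginary stretch is then empty, and Lemma~\ref{lem3} is valid throughout $0<k<8$.

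For $k<-1$ both sides are genuine elliptic integrals of a cubic with three real roots, and matching them against Lemma~\ref{lem4} is the crux. I would apply the M\"obius substitution $v=\rho_++(\rho_--\rho_+)/t$, which carries $(-\infty,\rho_-)$ onto $(0,1)$, the root $v=-4$ onto the third root $t_0=(1+2p)/(p^3(2+p))$ of the cubic in Lemma~\ref{lem4}, and $v=\rho_+$ onto $t=\infty$; equivalently, one may write each complete integral as $(\mathrm{const})\cdot{}_2F_1(\tfrac12,\tfrac12;1;\,\cdot\,)$ through the standard reduction of $\int_{-\infty}^{\rho_-}$ and compare it with the middle line of Lemma~\ref{lem4}. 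Either route reduces the whole problem to one scalar identity: that the modulus assembled from $\rho_\pm=\bigl(k(4-k)\pm\sqrt{k^3(k-8)}\bigr)/2$ equals $p^3(2+p)/(1+2p)$ and that the accompanying prefactor equals $p(1+p)/(2\sqrt{1+2p})$, all under the correspondence $k=-2/(p(1+p))$ of Lemma~\ref{lem4}. Verifying this identity — which simultaneously produces the factor $1$ — is the main obstacle; its very solvability is the analytic reflection of the splitting of the Jacobian of $C$ recorded in Section~\ref{s-intro}, which forces the two cubics to present the same elliptic curve and thereby guarantees that the required substitution exists.
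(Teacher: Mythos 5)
Your overall strategy is the same as the paper's: determine where the integrand of Lemma~\ref{lem2} is real, use $\Re$ to truncate the integral to the positivity interval of the cubic containing $-\infty$, and then match the resulting complete elliptic integral against Lemma~\ref{lem3} (via $t=-4/v$) or Lemma~\ref{lem4} (via a M\"obius substitution). Your treatment of $0<k\le4$ is complete and is exactly the paper's argument, including the correct accounting of the factor $2$ coming from the prefactors $1/\pi$ versus $1/(2\pi)$, and your discriminant/root-location checks (the quadratic equals $16(1+k)$ at $v=-4$ and $k^2(k+1)$ at $v=k(3-k)$) are right.

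For $k<-1$, however, you stop short of a proof: you set up a legitimate M\"obius substitution and correctly reduce the problem to checking that the third branch point lands at $(1+2p)/(p^3(2+p))$ and that the prefactors agree, but you then declare this verification ``the main obstacle'' and do not carry it out. That verification \emph{is} the content of this half of the lemma, so as written the argument is incomplete at exactly the decisive step. The paper's way of making it painless is to introduce the parametrisation $k=-2/(p(1+p))$ \emph{before} touching the integral of Lemma~\ref{lem2}: under it the quadratic $v^2+k(k-4)v+4k^2$ factors rationally, with roots $\rho_-=-4/p^2$ and $\rho_+=-4/(p+1)^2$, so the truncated integral becomes $-\frac1\pi\int_{-\infty}^{-4/p^2}p(p+1)\,\d v\big/\sqrt{-(v+4)(p^2v+4)((p+1)^2v+4)}$ with everything rational in~$p$; the single substitution $t=(p^2v+4)/(p^2(v+4))$ applied to the integral of Lemma~\ref{lem4} then reproduces this expression by a one-line computation. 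Your claimed value $t_0=(1+2p)/(2+p)p^{-3}$ of the third root is in fact correct, and your route would close once you compute $\rho_\pm$ in terms of $p$ rather than leaving them in the radical form $(k(4-k)\pm\sqrt{k^3(k-8)})/2$ --- keeping the radicals is precisely what makes the ``scalar identity'' look like an obstacle instead of routine algebra. No appeal to the splitting of the Jacobian is needed or used at this point.
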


\begin{proof}
If $k<-1$, we let $k=-2/(p(1+p))$ where $0<p<1$ and write the result of Lemma~\ref{lem2} as
\begin{align*}
\frac{\d\tilde g(k)}{\d k}
&=-\frac1\pi\,\Re\int_{-\infty}^{-2(3p^2+3p+2)/(p^2(1+p)^2)}\frac{p(p+1)\,\d v}{\sqrt{-(v+4)(p^2v+4)((p+1)^2v+4)}}
\\
&=-\frac1\pi\int_{-\infty}^{-4/p^2}\frac{p(p+1)\,\d v}{\sqrt{-(v+4)(p^2v+4)((p+1)^2v+4)}}.
\end{align*}
On the other hand, the substitution $t=(p^2v+4)/(p^2(v+4))$ in the integral of Lemma~\ref{lem4} leads to
$$
\frac{\d g(k)}{\d k}
=-\frac{1}\pi\int_{-\infty}^{-4/p^2}\frac{p(p+1)\,\d v}{\sqrt{-(v+4)(p^2v+4)((p+1)^2v+4)}},
$$
so that the first equality of the lemma follows.

For $0<k<8$, the substitution $t=-4/v$ in the integral of Lemma~\ref{lem3} results in
\begin{equation}
\frac{\d g(k)}{\d k}
=\frac1{2\pi}\int_{-\infty}^{-4}\frac{\d v}{\sqrt{-(v+4)(v^2+k(k-4)v+4k^2)}}.
\label{I1}
\end{equation}
This coincides with the representation of Lemma~\ref{lem2} for the range $0<k\le4$.
\end{proof}

\begin{remark}
Using the evaluation \eqref{S3} we can extend formula~\eqref{I1} to the interval $k>8$ as follows:
$$
\frac{\d g(k)}{\d k}
=\frac1{2\pi}\,\Re\int_{-\infty}^{-4}\frac{\d v}{\sqrt{-(v+4)(v^2+k(k-4)v+4k^2)}}.
$$
As in the case $4<k<8$, the resulting elliptic integral does not possess any clear relation with
the incomplete elliptic integral obtained for $\d\tilde g(k)/\d k$ in Lemma~\ref{lem2}.
\end{remark}

\begin{proof}[Proof of Theorem~\textup{\ref{th1}}]
Since both the Mahler measures $g(k)=\m(P_{2-k}(x,y))$ and $\tilde g(k)=\m(Q_k(X,Y))$
are continuous functions of real parameter~$k$, integrating the equalities of Lemma~\ref{lem5} imply
$$
\tilde g(k)=g(k)+c_- \quad\text{if}\; k\le-1,
\quad\text{and}\quad
\tilde g(k)=2g(k)+c_+ \quad\text{if}\; 0\le k\le4,
$$
where $c_-$ and $c_+$ are certain constants.

In~\eqref{Delta}, $\Delta_0(X)\le0$ on $|X|=1$, hence $|Y_1(X)|=|Y_2(X)|=1$ on the unit circle, and $\tilde g(0)=\m(\wt Q_0(X,Y))=0$.
Furthermore, $P_2(x,y)=(x+1)(y+1)(x+y)$ implying $g(0)=\m(P_2(x,y))=0$. Thus, $c_+=0$.

As $k\to\infty$,
\begin{align*}
\tilde g(k)
&=\m(Q_k(X,Y))
=\m\biggl(k\cdot\biggl((X+1)^2Y+\frac1k(Y^2+(X^4+1)Y+X^4)\biggr)\biggr)
\\
&=\log|k|+\m((X+1)^2Y)+O(1/k)
=\log|k|+O(1/k)
\end{align*}
and, similarly,
$$
g(k)=\m(P_{2-k}(x,y))\sim\log|k|+O(1/k).
$$
In particular, $\tilde g(k)-g(k)\to0$ as $k\to-\infty$ implying $c_-=0$.
\end{proof}

\begin{proof}[Proof of Theorem~\textup{\ref{th2}}]
The required evaluations follow from the formulae for $\m(P_0)$, $\m(P_{-2})$, $\m(P_1)$, $\m(P_4)$, $\m(P_6)$ and $\m(P_{10})$
obtained in \cite{Mel09,RV99,RZ12}.
\end{proof}

\section{Conclusion}
\label{s-concl}

Our proof of Theorem~\textup{\ref{th1}} makes use of two important features of the Mahler measures of polynomial families.
First, their derivatives with respect to the parameter of the family satisfy Picard--Fuchs differential equations\,---\,the property guaranteed
by the fact that the Mahler measures and their derivatives are periods. Second, when a family generically corresponds to curves of genus~2
whose Jacobian splits into the product of two elliptic curves, the derivative of the Mahler measure
reduces to (sometimes incomplete) elliptic integrals. Exploring this direction further we are able to establish
another general equality conjectured by Boyd~\cite{Boy98} between the Mahler measures of two hyperelliptic families
and even to relate them to the Mahler measures of an elliptic family.

\begin{theorem}
\label{th3}
Define
\begin{align*}
P_k(x,y)
&=(x^2+x+1)y^2+kx(x+1)y+x(x^2+x+1),
\\
Q_k(x,y)
&=(x^2+x+1)y^2
+(x^4+kx^3+(2k-4)x^2+kx+1)y
+x^2(x^2+x+1),
\\
R_k(x,y)
&=y^3-y+x^3-x+kxy,
\end{align*}
so that generically $P_k(x,y)=0$ and $Q_k(x,y)=0$ are families of genus~$2$ curves, while $R_k(x,y)=0$
is a family of elliptic curves for $k\ne0,\pm3$.
Then, for $k\in\mathbb R$ such that $|k|\ge16/(3\sqrt3)=3.0792\dots$, we have $\m(P_k)=\m(R_k)$. Furthermore, $\m(Q_{k+2})=\m(R_k)$ for $k\ge4$.
\end{theorem}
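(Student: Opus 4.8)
The plan is to mirror the proof of Theorem~\ref{th1}: reduce the $k$-derivative of each Mahler measure to an elliptic integral, match these integrals by explicit substitutions on the indicated ranges, and finally integrate back, fixing the constants of integration from the asymptotics as $k\to\pm\infty$.

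First I would treat the two hyperelliptic families, both of which are quadratic in~$y$. For $P_k$ the product of the two $y$-roots equals $x$, and for $Q_k$ it equals $x^2$; in either case this product has modulus~$1$ on $|x|=1$. Substituting $y=x^{1/2}w$ in $P_k$ (respectively $y=xw$ in $Q_k$) turns the equation into the symmetric shape $w^2+B(x)w+1=0$ with $w_1w_2=1$, exactly as in Section~\ref{s-bosman}. A direct computation, writing everything through $t=x+1/x$ (and $2\cos(\theta/2)=x^{1/2}+x^{-1/2}$ in the case of $P_k$), shows that the coefficients
$$
B_P(x)=\frac{k(x+1)x^{1/2}}{x^2+x+1},\qquad
B_Q(x)=\frac{x^4+kx^3+(2k-4)x^2+kx+1}{x(x^2+x+1)}
$$
are real on the unit circle. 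Applying Jensen's formula in~$y$ and noting that $\m(x^2+x+1)=0$, I obtain clean one-variable integrals $\frac1{2\pi i}\int_{|x|=1}\log|W_1(x)|\,\frac{\d x}x$. Differentiating in~$k$ and reducing the resulting hyperelliptic integral by a Goursat substitution, as in Lemma~\ref{lem2}, expresses $\d\m(P_k)/\d k$ and $\d\m(Q_{k+2})/\d k$ as complete elliptic integrals; the ranges $|k|\ge16/(3\sqrt3)$ and $k\ge4$ should be precisely those on which the polynomial under the square root keeps the sign pattern that makes the integral complete.

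For the elliptic family $R_k$ I would argue as in Section~\ref{s-boyd}. Writing $R_k=kxy\bigl(1+(y^3-y+x^3-x)/(kxy)\bigr)$ and expanding the logarithm for large $|k|$ gives
$$
\m(R_k)=\log|k|+\Re\sum_{n\ge1}\frac{(-1)^{n+1}}{n\,k^n}\,\CT\biggl(\Bigl(\frac{y^2-1}x+\frac{x^2-1}y\Bigr)^{\!n}\biggr),
$$
where the constant terms vanish for odd~$n$ by symmetry, so the expansion is in fact a power series in $1/k^2$. This series should satisfy a Picard--Fuchs equation whose hypergeometric solution\,---\,of the same ${}_2F_1\bigl(\frac13,\frac23;1;\,\cdot\,\bigr)$ type met in Section~\ref{s-boyd}\,---\,I would identify and continue along the real line; differentiating then presents $\d\m(R_k)/\d k$ as an elliptic integral, with the singular fibre (the argument of the ${}_2F_1$ reaching~$1$) occurring exactly at $|k|=16/(3\sqrt3)$, which accounts for the threshold in the statement.

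The decisive step is the comparison: I would exhibit explicit rational substitutions carrying the elliptic integral for $R_k$ onto those for $P_k$ and for $Q_{k+2}$, thereby proving $\d\m(P_k)/\d k=\d\m(R_k)/\d k$ on $|k|\ge16/(3\sqrt3)$ and $\d\m(Q_{k+2})/\d k=\d\m(R_k)/\d k$ on $k\ge4$. Integrating these identities and matching the common asymptotics $\m(P_k),\m(Q_{k+2}),\m(R_k)\sim\log|k|$ as $k\to\infty$ forces both constants of integration to vanish, giving the two equalities. I expect the main obstacle to be twofold: organising the series and Picard--Fuchs analysis for the \emph{cubic}-in-$y$ curve $R_k$ (where the correct analytic continuation across the singular fibre must be chosen), and, above all, discovering the precise change of variables that identifies $R_k$'s elliptic integral with the hyperelliptic ones\,---\,this last substitution is the analogue of the matching in Lemma~\ref{lem5} and is where the genuine content of the theorem lies.
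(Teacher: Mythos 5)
There is a genuine gap: the step that you yourself identify as carrying ``the genuine content of the theorem''\,---\,the explicit change of variables matching the elliptic integral for $R_k$ with those for $P_k$ and for $Q_{k+2}$\,---\,is not supplied, so what you have is a programme rather than a proof. Note that the paper does not prove Theorem~\ref{th3} at all: it explicitly defers the argument to \cite{BeZu} on the grounds that ``some other analytical tools'' are required, which already signals that a verbatim transplant of the Lemma~\ref{lem2}--Lemma~\ref{lem5} machinery is known by the authors not to suffice. The reductions you assert along the way (Jensen plus a Goursat substitution for the two hyperelliptic families, the Picard--Fuchs equation and its ${}_2F_1\bigl(\tfrac13,\tfrac23;1;\cdot\bigr)$ solution for the cubic-in-$y$ family) are plausible but none is carried out, and the ranges of validity are described only by the hope that they ``should be precisely those'' on which the integrals are complete.

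Beyond the missing decisive step, two concrete points in what you do sketch are wrong or incomplete. First, the threshold $16/(3\sqrt3)$ is not a singular fibre and not a point where the hypergeometric argument reaches~$1$: the singular fibres of $R_k=0$ are at $k=0,\pm3$ by the statement itself, and $3\ne 16/(3\sqrt3)=3.079\dots$. The threshold is a torus condition. Writing $R_k/(xy)=k+f$ with $f=(y^2-1)/x+(x^2-1)/y$, the set of \emph{real} values attained by $f$ on $|x|=|y|=1$ is exactly $[-16/(3\sqrt3),\,16/(3\sqrt3)]$ (on the branch $y=\bar x=1/x$ one finds $f=-8\sin^2\alpha\cos\alpha$ for $x=e^{i\alpha}$, extremal at $\cos\alpha=1/\sqrt3$; the other real branches give smaller intervals), so $R_k=0$ meets the integration torus precisely for $|k|\le16/(3\sqrt3)$. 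It is this, not a degeneration of the fibre, that fixes the range in the theorem, and any proof must locate it correctly. Second, $|f|$ attains the value $4$ on the torus (at $x=y=i$), so the termwise expansion of $\log|1+f/k|$ you propose is not justified on $16/(3\sqrt3)\le|k|\le4$, which is part of the claimed range; a separate analytic-continuation argument (say, via the Picard--Fuchs equation, whose finite singularities sit at $k=\pm3$) is needed there and is not given. Until the matching substitutions are exhibited and these range issues are settled, neither identity in the statement is established.
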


The proof of this theorem requires some other analytical tools and is presented in our paper~\cite{BeZu}.


\end{document}